\newtheorem{theorem}{Theorem}[section]
\theoremstyle{definition}
\theoremstyle{remark}
\newtheorem{remark}[theorem]{Remark}
\numberwithin{equation}{section}
\theoremstyle{remark}
\theoremstyle{plain}
\newtheorem{thm}{\protect\theoremname}
\theoremstyle{definition}
\newtheorem{defn}{\protect\definitionname}
\providecommand{\definitionname}{Definition}
\providecommand{\remarkname}{Remark}
\providecommand{\theoremname}{Theorem}
\begin{document}

\title{Initial State Reconstruction on Graphs}

\author[V. A. Khoa]{Vo Anh Khoa}
\address{Department of Mathematics, Florida A\&M University, Tallahassee, FL 32307, USA}
\email{anhkhoa.vo@famu.edu}
\thanks{The first author was supported by the Faculty Research Awards Program - FAMU \#007633.}

\author[M. T. N. Truong]{Mai Thanh Nhat Truong}
\address{Department of Multimedia Engineering, Dongguk University, South Korea}
\email{mtntruong@toantin.org}

\author[I. Hogan]{Imhotep Hogan}
\address{Department of Mathematics, Florida A\&M University, Tallahassee, FL 32307, USA}
\email{imhotep1.hogan@famu.edu}

\author[R. Williams]{Roselyn Williams}
\address{Department of Mathematics, Florida A\&M University, Tallahassee, FL 32307, USA}
\email{roselyn.williams@famu.edu}

\subjclass{65M12,15A18,05C50}
\date{\today}


\keywords{Initial state reconstruction, Simple graphs, Denoising, Error estimates, Graph Laplacian}

\begin{abstract}
The presence of noise is an intrinsic problem in acquisition processes for digital images. One way to enhance images is to combine the forward and backward diffusion equations. However, the latter problem is well known to be exponentially unstable with respect to any small perturbations on the final data. In this scenario, the final data can be regarded as a blurred image obtained from the forward process, and that image can be pixelated as a network. Therefore, we study in this work a regularization framework for the backward diffusion equation on graphs. Our aim is to construct a spectral graph-based solution based upon a cut-off projection. Stability and convergence results are provided together with some numerical experiments.
\end{abstract}

\maketitle

\section{Statement of the initial state reconstruction on graphs}

Regularization of the restoration problem is a denoising technique used in the hope that it can
retain the image crucial signal features in the presence of noise (motion artifacts, blurring, distortion)
during procurement course. It is worth mentioning that even modern cameras which
are able to acquire high resolution images still, nowadays, produce noisy outputs.
Noise in those images usually constitutes high frequencies; therefore, one way to denoise the
images is to smooth “away” the noise. It is well known that one of the standard filtering
processes is Gaussian smoothing. In particular, let $g_0$ be a gray scale image as a real-valued
mapping; see Figure \ref{fig:1} for an overview of the surface representation of a gray scale
image. Let $g_0^{\varepsilon}$ be its noisy image, where $\varepsilon\in (0,1)$ represents the noise level. Suppose that these $g_0, g_0^{\varepsilon}$ belong to $L^2(\mathbb{R}^{2})$. Then, the Gaussian smoothing constructs a smoothed
version of $g_0^{\varepsilon}$ by the convolution $G_{\sigma}*g_0^{\varepsilon}\in C^{\infty}(\mathbb{R}^2)$,
\begin{align*}
\left(G_{\sigma}*g_{0}^{\varepsilon}\right)\left(x\right)=\int_{\mathbb{R}^{2}}\frac{1}{2\pi\sigma^{2}}e^{-\left|x-y\right|^{2}/\left(2\sigma^{2}\right)}g_{0}^{\varepsilon}\left(y\right)dy,\quad\sigma>0.
\end{align*}

\begin{figure}
	\begin{centering}
		\hfill\includegraphics[height=4cm]{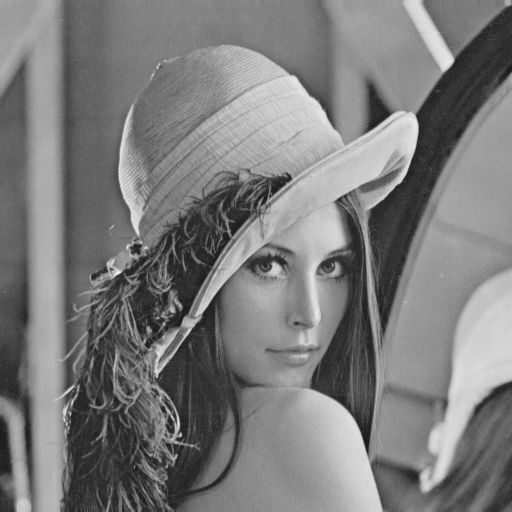}\hfill
		\includegraphics[height=4cm]{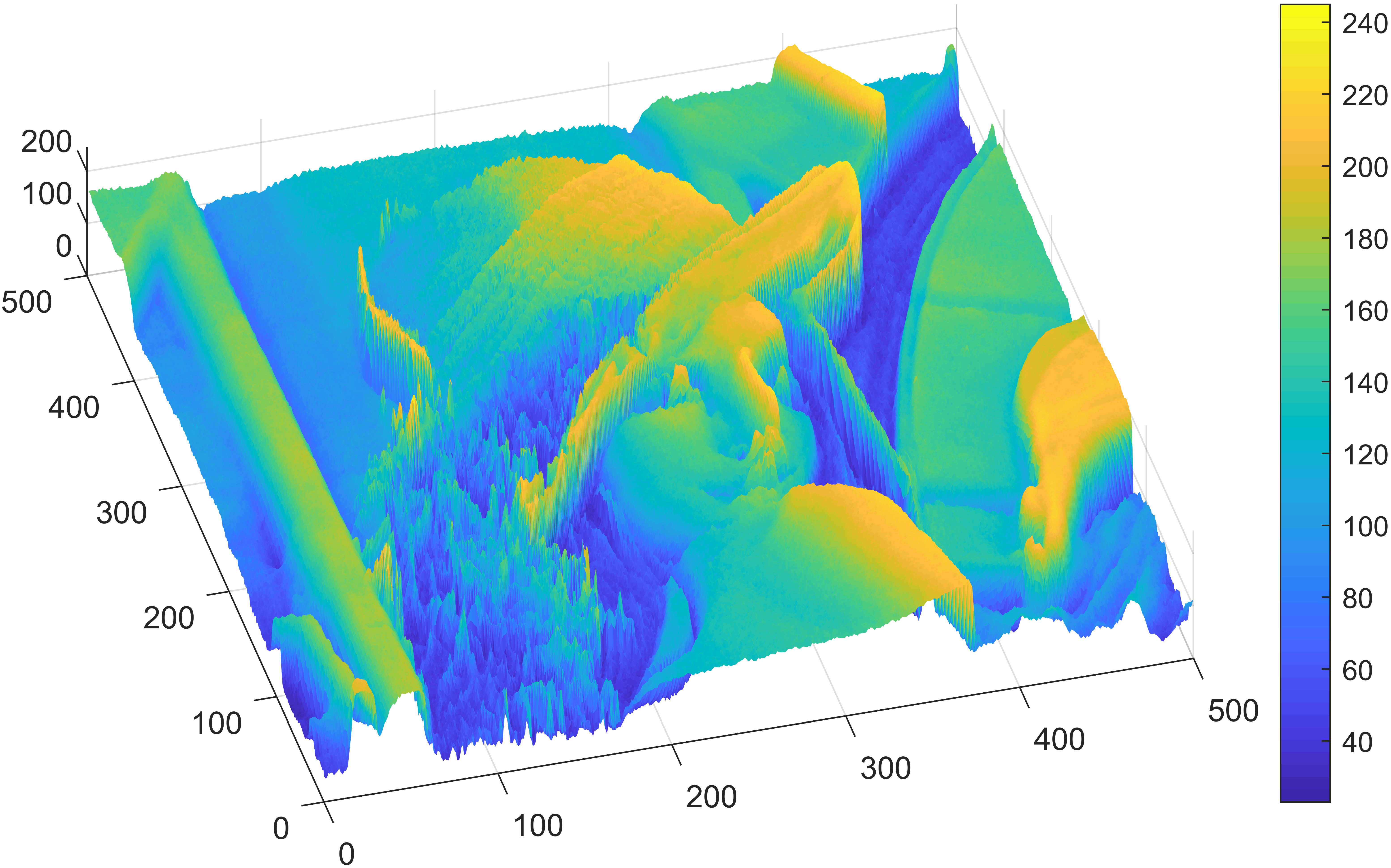}\hfill
	\end{centering}
	\caption{Illustration of the surface representation of a gray scale image with size $400\times 400$ (left). An image can be considered as a two-dimensional surface (right) embedded in three-dimensional space. The pixel value ranges from 0 to 255.\label{fig:1}}
\end{figure}

In terms of Partial Differential Equations (PDEs), the Gaussian convolution is obtained by solving a sequence of the parabolic
PDEs forward in time. Indeed, if we consider
\begin{align}\label{pdeimage}
\begin{cases}
	\partial_{t}u-\Delta u=0 & \text{in }\mathbb{R}^{2}\times\left(0,T\right),\\
	u\left(x,0\right)=g_{0}^{\varepsilon}\left(x\right) & \text{in }\mathbb{R}^{2},
\end{cases}
\end{align}
then its solution can be expressed as $u(x,t) = \left(G_{\sqrt{2t}}*g_0^{\varepsilon}\right)(x)$ for $x\in\mathbb{R}^2,t>0$.

We remark that due to the smoothness property of the parabolic equation, the smoothing process
will destroy image characteristics such as lines and edges; see Figure \ref{fig:2}. Stemming from
the idea in \cite{Gilboa2002,Welk2009}, we combine the forward and backward diffusion equations to enhance images. Henceforth, it brings us back to the initial state reconstruction problem in which the
noisy final condition as our blurred image is $g_{T}^{\varepsilon}=G_{\sqrt{2T}}*g_{0}^{\varepsilon}(x)$. Unique continuity of this problem is studied in \cite{Friedman2008}.

\begin{figure}
	\begin{centering}
		\hfill\includegraphics[height=4cm]{lena_gray}\hfill
		\includegraphics[height=4cm]{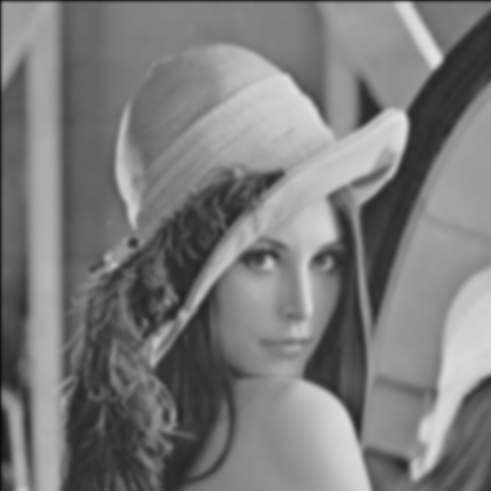}\hfill
	\end{centering}
	\caption{Illustration of the forward process for a color scale image (left) after $t=0.001$. The obtained image (right) is very blurry due to the smoothness property of the parabolic equation. 
		\label{fig:2}}
\end{figure}

\begin{figure}
	\begin{centering}
		\hfill\includegraphics[scale=0.7]{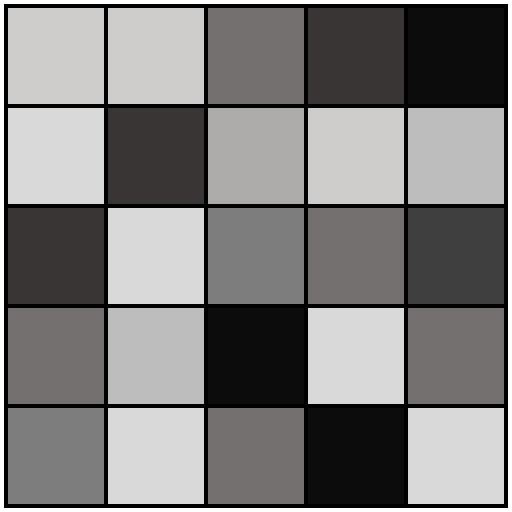}\hfill
		\includegraphics[scale=0.7]{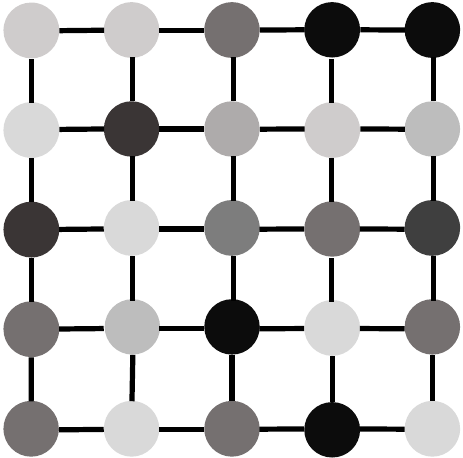}\hfill
	\end{centering}
	\caption{Illustration of how part of a gray scale image (left) is pixelated as a network (right). The value of every vertex is based on the pixel value at the corresponding pixel.
		\label{fig:3}}
\end{figure}

We would like to mention that image can be pixelated as a network; see Figure \ref{fig:3}. Therefore,
one can consider the linear diffusion PDE (\ref{pdeimage}) on graphs. In this work, we concentrate on the simple graph $G=\left(V,E\right)$. It is
an ordered pair of sets, where
\begin{itemize}
	\item $E\subset\left\{ \left\{ x_{i},x_{j}\right\} |x_{i},x_{j}\in V,i\ne j\right\} $
	is a set of undirected edges;
	\item $V$ is a set of vertices $\left\{ x_{i}\right\} _{1\le i\le n}$
	with $n=\left|V\right|<\infty$ being the number of vertices.
\end{itemize}
Consider $u_{i}\left(t\right)=u\left(x_{i},t\right)$ as the pixel value at time $t$ and vertex $x_i$. Note here that the number $n$ should be fixed because of the fixed resolution of the image. The
Graph Laplacian is defined as
$\mathbf{L}=\mathbf{D}-\mathbf{A}$. Here, $\mathbf{A}\in \mathbb{R}^{n\times n}$ is the adjacency matrix, whose the entries $A_{ij}$ are given by
\[
A_{ij}=\begin{cases}
	1 & \text{if }\left\{ x_{i},x_{j}\right\} \in E,\\
	0 & \text{otherwise}.
\end{cases}
\]

From knowledge of the adjacency matrix $\mathbf{A}$, we define the degree $D_{i}=D\left(x_{i}\right)>0$ of a
vertex $x_{i}$ in the simple graph $G$ is the number of vertices
in $G$ that are adjacent to $x_{i}$. As a consequence, it holds
true that
\[
\sum_{j=1}^{n}A_{ij}=D\left(x_{i}\right)=:D_{i}\quad\text{for }1\le i\le n.
\]
By this way, $\mathbf{D}\in \mathbb{R}^{n\times n}$ is the degree matrix which is diagonal with $D_{i}$ being the degree of the $i$-th vertex. Henceforth, $\mathbf{D}\in \mathbb{R}^{n\times n}$ is of the following form:
\[
\mathbf{D}=\begin{bmatrix}D_{1} & 0 & \cdots & 0 & 0\\
	0 & D_{2} & \ddots &  & 0\\
	\vdots & \ddots & \ddots & \ddots & \vdots\\
	0 &  & \ddots & D_{n-1} & 0\\
	0 & 0 & \cdots & 0 & D_{n}
\end{bmatrix}.
\]
In this setting, our Graph Laplacian matrix $\mathbf{L}\in\mathbb{R}^{n\times n}$ is symmetric and its entries are given by
\begin{align*}
	L_{ij}=\begin{cases}
		-1 & \text{if }i\ne j\text{ and }\left\{ x_{i},x_{j}\right\} \in E,\\
		D_{i} & \text{if }i=j,\\
		0 & \text{otherwise}.
	\end{cases}
\end{align*}

Let $U=U\left(t\right)$ now be the $n$-dimensional vector of $u_{i}\left(t\right)$ for $1\le i\le n$, i.e.
\[
U=\begin{pmatrix}u_{1} & u_{2} & \cdots & u_{n-1} & u_{n}\end{pmatrix}^{\text{T}}.
\]
Suppose that $U_T^{\varepsilon}$ is the blurred image $g_T^{\varepsilon}$ pixelated on the simple graph under study. In this scenario, our initial state reconstruction reads as
\begin{align}\label{pdegraph}
	\begin{cases}
		\frac{dU}{dt}\left(t\right)+\mathbf{L}U\left(t\right)=0 & \text{for }t\in\left(0,T\right),\\
		U\left(T\right)=U_{T}^{\varepsilon}\in\mathbb{R}^{n}.
	\end{cases}
\end{align}

%

\section{Fourier instability of the initial state reconstruction}

In this section, we show the natural instability of the initial state reconstruction problem (\ref{pdegraph}). When doing so, we need to introduce the standard inner product
\[
\left\langle A,B\right\rangle =A^{\text{T}}B=\sum_{i=1}^{n}a_{i}b_{i}\quad\text{for }A,B\in\mathbb{R}^{n}.
\]
Hereby, the corresponding $\ell^{2}$-norm $\left\Vert A\right\Vert =\sqrt{\left\langle A,A\right\rangle }$
is given by
\[
\left\Vert A\right\Vert ^{2}=\sum_{i=1}^{n}a_{i}^{2}.
\]

Next, we introduce in Definition \ref{def:1} regarding an eigenvector of a matrix. Then, we state the standard spectral theorem without proof in Theorem \ref{thm:1}. These are essential in our way to show the Fourier instability of problem (\ref{pdegraph}).

\begin{defn}\label{def:1}
	An eigenvector of a matrix $\mathcal{A}\in\mathbb{R}^{n\times n}$
	is a (non-zero) vector $\phi$ such that $\mathcal{A}\phi=\lambda\phi$
	for some scalar $\lambda$. The value $\lambda$ is the corresponding
	eigenvalue, which is a root of the characteristic polynomial of $\mathcal{A}$,
	$p_{\mathcal{A}}=\det\left(\lambda I-\mathcal{A}\right)$ where $I\in\mathbb{R}^{n\times n}$
	denotes the identity matrix.
\end{defn}

\begin{thm}\label{thm:1}
	Let $\mathbf{M}\in\mathbb{R}^{n\times n}$ be a real and symmetric
	matrix. Then it has $n$ orthogonal eigenvectors and the eigenvalues
	are real and non-negative.
\end{thm}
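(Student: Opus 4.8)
The plan is to establish the three assertions in the order they build on one another: first the reality of the eigenvalues, then the existence of a full orthogonal eigenbasis, and finally the sign condition. For the reality claim, I would start from an arbitrary (a priori complex) eigenpair $\mathbf{M}\phi = \lambda\phi$ with $\phi \neq 0$, guaranteed to exist because the characteristic polynomial $p_{\mathbf{M}}$ has a root over $\mathbb{C}$ by the fundamental theorem of algebra. Taking the Hermitian form $\overline{\phi}^{\text{T}}\mathbf{M}\phi$ and using $\mathbf{M}^{\text{T}} = \mathbf{M}$ together with the fact that the entries are real, one checks that this scalar equals its own complex conjugate and is therefore real; since $\overline{\phi}^{\text{T}}\phi = \|\phi\|^2 > 0$, the quotient $\lambda = \overline{\phi}^{\text{T}}\mathbf{M}\phi / \|\phi\|^2$ is real. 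In particular every root of $p_{\mathbf{M}}$ is real, so we may work entirely over $\mathbb{R}$ from here on.

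For the orthogonal eigenvectors, the easy half is that eigenvectors belonging to distinct eigenvalues are automatically orthogonal: if $\mathbf{M}\phi = \lambda\phi$ and $\mathbf{M}\psi = \mu\psi$, then symmetry gives $\lambda\langle\phi,\psi\rangle = \langle\mathbf{M}\phi,\psi\rangle = \langle\phi,\mathbf{M}\psi\rangle = \mu\langle\phi,\psi\rangle$, so $(\lambda-\mu)\langle\phi,\psi\rangle = 0$ forces $\langle\phi,\psi\rangle = 0$. The substantive part, and the step I expect to be the main obstacle, is producing a full set of $n$ orthogonal eigenvectors even when an eigenvalue is repeated, where the clean argument above says nothing within a single eigenspace. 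I would handle this by induction on $n$: fix one unit eigenvector $\phi_{1}$ and observe that its orthogonal complement $\phi_{1}^{\perp}$ is $\mathbf{M}$-invariant, since for $v\perp\phi_{1}$ one has $\langle\mathbf{M}v,\phi_{1}\rangle = \langle v,\mathbf{M}\phi_{1}\rangle = \lambda_{1}\langle v,\phi_{1}\rangle = 0$. The restriction of $\mathbf{M}$ to the $(n-1)$-dimensional subspace $\phi_{1}^{\perp}$ is again symmetric, so the inductive hypothesis furnishes $n-1$ mutually orthogonal eigenvectors there, which together with $\phi_{1}$ complete the basis.

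For the non-negativity of the eigenvalues I would be careful, because this does not hold for an arbitrary real symmetric matrix (for instance $-I$), but only when $\mathbf{M}$ is positive semidefinite. That is precisely the situation relevant to this paper, where $\mathbf{M}$ plays the role of the graph Laplacian $\mathbf{L} = \mathbf{D} - \mathbf{A}$. The key is the sum-of-squares identity for the associated quadratic form: expanding $\phi^{\text{T}}\mathbf{L}\phi = \sum_{i} D_{i}\phi_{i}^{2} - \sum_{i,j}A_{ij}\phi_{i}\phi_{j}$ and using $D_{i} = \sum_{j} A_{ij}$ together with the symmetry $A_{ij} = A_{ji}$, one regroups the terms edge by edge to obtain
\begin{align*}
\phi^{\text{T}}\mathbf{L}\phi = \sum_{\left\{ x_{i},x_{j}\right\} \in E}\left(\phi_{i} - \phi_{j}\right)^{2} \ge 0.
\end{align*}
Applying this to a real eigenvector $\phi$ with $\mathbf{L}\phi = \lambda\phi$ then gives $\lambda = \phi^{\text{T}}\mathbf{L}\phi/\|\phi\|^{2} \ge 0$, which closes the argument. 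Beyond the multiplicity issue flagged above, the only subtlety here is bookkeeping the factor of two that appears because each undirected edge $\left\{ x_{i},x_{j}\right\}$ is counted twice in the double sum; keeping track of it carefully is what turns the raw expansion into the clean sum of squares.
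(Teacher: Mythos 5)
Your proof is correct, but note that there is no proof in the paper to compare it against: the authors explicitly state this result ``without proof'' as the standard spectral theorem, and defer the eigenvalue facts for the Laplacian to the literature (the Brouwer--Haemers reference). Your arguments for the reality of the eigenvalues (via the Hermitian form $\overline{\phi}^{\text{T}}\mathbf{M}\phi$) and for the existence of a full orthogonal eigenbasis (induction on $n$ using the $\mathbf{M}$-invariance of $\phi_{1}^{\perp}$, which is precisely what handles repeated eigenvalues) are the standard ones and are sound. The most valuable part of your proposal is the caveat you raise about non-negativity: as literally stated, the theorem is false --- $-I$ is real and symmetric with every eigenvalue equal to $-1$ --- so no proof of the statement as written can exist. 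Non-negativity is a property of positive semidefinite matrices, not of real symmetric matrices in general, and the paper's phrasing conflates the spectral theorem with the semidefiniteness of the graph Laplacian. Your repair, specializing to $\mathbf{L}=\mathbf{D}-\mathbf{A}$ and using the quadratic-form identity $\phi^{\text{T}}\mathbf{L}\phi=\sum_{\left\{ x_{i},x_{j}\right\} \in E}\left(\phi_{i}-\phi_{j}\right)^{2}\ge0$ (with the factor of two from the double-counted edges handled correctly), is exactly the right fix, and it is all the paper actually needs, since Theorem 1 is only ever applied to $\mathbf{L}$. In short: your proof is correct where the statement is correct, you identified the one point where the statement itself must be amended, and since the paper offers no argument at all, your proof supplies what the paper omits.
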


Since the Graph Laplacian matrix $\mathbf{L}$ is real and symmetric, we know
that there is an orthonormal basis $\phi_{1},\ldots,\phi_{n}$ of
$\mathbb{R}^{n}$ such that each $\phi_{j}$ is an eigenvector of
$\mathbf{L}$. Let $\lambda_{j}$ be the real eigenvalue corresponding
to $\phi_{j}$, i.e. $\mathbf{L}\phi_{j}=\lambda_{j}\phi_{j}$ for
$1\le j\le n$. Moreover, we have $0\le \lambda_1 \le \lambda_2 \le \ldots \le \lambda_{n}$, and the sum of these eigenvalues  is twice the number of edges of the simple graph under consideration  (cf. \cite{Brouwer2011}). We find $U\left(t\right)$ as a linear combination
of the eigenvectors:
\begin{equation}
	U\left(t\right)=\sum_{i=1}^{n}B_{i}\left(t\right)\phi_{i}.\label{eq:spectral}
\end{equation}

It is clear that in this form, $U(t)\in \mathbb{R}^{n}$ for every $t$. Now, we multiply both sides of (\ref{pdegraph}) (in the sense of the standard
product) by $\phi_{j}\in\mathbb{R}^{n}$ and thus, arrive at
\[
\frac{d}{dt}\left\langle U\left(t\right),\phi_{j}\right\rangle +\left\langle \mathbf{L}U\left(t\right),\phi_{j}\right\rangle =0,
\]
equivalently,
\[
\frac{d}{dt}\left\langle U\left(t\right),\phi_{j}\right\rangle +\left\langle U\left(t\right),\mathbf{L}\phi_{j}\right\rangle =0.
\]
Since $\mathbf{L}\phi_{j}=\lambda_{j}\phi_{j}$, we obtain the following
differential equation:
\begin{equation}
	\frac{d}{dt}\left\langle U\left(t\right),\phi_{j}\right\rangle +\lambda_{j}\left\langle U\left(t\right),\phi_{j}\right\rangle =0,\label{eq:5}
\end{equation}
associated with the following final condition (cf. (\ref{pdegraph})),
\begin{equation}
	\left\langle U\left(T\right),\phi_{j}\right\rangle =\left\langle U_{T}^{\varepsilon},\phi_{j}\right\rangle .\label{eq:8}
\end{equation}
Solving (\ref{eq:5})-(\ref{eq:8}) as an initial-value differential
problem, we obtain
\begin{equation}
	\left\langle U\left(t\right),\phi_{j}\right\rangle =e^{\lambda_{j}(T-t)}\left\langle U_{T}^{\varepsilon},\phi_{j}\right\rangle .\label{eq:9}
\end{equation}
Then plugging the spectral form of $U\left(t\right)$ (cf. (\ref{eq:spectral}))
in (\ref{eq:9}) and using the orthogonality of $\phi_{i}$ yield
\[
e^{\lambda_{j}(T-t)}\left\langle U_{T}^{\varepsilon},\phi_{j}\right\rangle =\left\langle U\left(t\right),\phi_{j}\right\rangle =\left\langle \sum_{i=1}^{n}B_{i}^{\varepsilon}\left(t\right)\phi_{i},\phi_{j}\right\rangle =\sum_{i=1}^{n}B_{i}^{\varepsilon}\left(t\right)\left\langle \phi_{i},\phi_{j}\right\rangle =B_{j}^{\varepsilon}\left(t\right).
\]

Henceforth, our solution $U\left(t\right)$ to the initial state reconstruction problem (\ref{pdegraph})
can be computed as
\begin{align}\label{sol1}
U\left(t\right)=\sum_{i=1}^{n}e^{\lambda_{i}(T-t)}\left\langle U_{T}^{\varepsilon},\phi_{i}\right\rangle \phi_{i}.
\end{align}
Hence, the initial information of an pixelated noisy image, denoted by $U_0^{\varepsilon} = U^{\varepsilon}(0)$ is formulated by
\begin{align}\label{sol2}
	U^{\varepsilon}_{0}=\sum_{i=1}^{n}e^{\lambda_{i}T}\left\langle U_{T}^{\varepsilon},\phi_{i}\right\rangle \phi_{i}.
\end{align}

Observe in (\ref{sol1}) that the exponential kernel $e^{\lambda_{i}(T-t)}$ is the main natural factor of the Fourier instability. In this sense, the solution is exponentially unstable when the number of vertices, which is $n$, becomes larger and larger for a better resolution of image. If we keep refining the mesh of discretization or, in other words, increasing the number of vertices, the reconstruction process will be worse regardless of any numerical method applied. 


Our aim in this work is to study a stable approximate solution of the discrete model  on graphs (\ref{pdegraph}). This new research
will formulate a new numerical approach in image denoising among well known approaches
such as Perona--Malik \cite{Shi2021}, $p$-Laplacian \cite{Shi2021a}, Mumford--Shah \cite{Jung2011} and deep
learning \cite{Tian2020}.


\section{A spectral graph-based approximate solution}

For each noise level $\varepsilon$, we consider the set of admissible eigenvalues $\Theta\left(\varepsilon\right)=\left\{ i\in\mathbb{N}^{*}:\lambda_{i}\le M_{\varepsilon}\right\} $. Here, $M_{\varepsilon} = M(\varepsilon)>0$ is the so-called regularization parameter which will be chosen later. It should be that $\left|\Theta(\varepsilon)\right|\le \left|V\right|$ as the resolution of image is fixed. If there exists $\varepsilon_0$ such that $\left|\Theta(\varepsilon_0)\right|> \left|V\right|$, for any $\varepsilon\ge \varepsilon_0$ we set $M_{\varepsilon} = \lambda_n$, where we recall that $\lambda_n$ is the largest eigenvalue of the Graph Laplacian matrix. We thereby construct the following cut-off projection in which the number of vertices $n$ is selected appropriately in terms of $\varepsilon$.
\begin{align}\label{sol3}
	\mathbf{P}^{\varepsilon}U\left(t\right)=\sum_{i\in \Theta\left(\varepsilon\right)}e^{\lambda_{i}(T-t)}\left\langle U(T),\phi_{i}\right\rangle \phi_{i}.
\end{align}

\begin{remark}
	The underlying cut-off projection is one of the conventional Fourier-based approaches that is ``computable'' in the regularization theory; cf. \cite{NTT10,Tuan2018}. We remark that the existing literature on regularization of terminal-value problems is huge. For brevity, we mention here some PDE-based approaches (e.g. quasi-reversibility method \cite{LL67,Nguyen2019}, quasi-boundary value method \cite{Denche2005}, and references cited therein) and some minimization-based approaches (see in \cite{Fan2019} regarding methods of total variation, sparse representation and others, and convexification in \cite{Klibanov2019}).
\end{remark}


\begin{thm}\label{thm:2}
	Consider $U(T)\in\mathbb{R}^{n}$. Then, it holds true that
	\[
	\left\Vert \mathbf{P}^{\varepsilon}U\left(t\right)\right\Vert \le e^{M_{\varepsilon}\left(T-t\right)}\left\Vert U(T)\right\Vert,
	\]
\end{thm}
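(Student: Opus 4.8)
The plan is to exploit the orthonormality of the eigenbasis $\{\phi_i\}_{1\le i\le n}$ furnished by Theorem \ref{thm:1}, which reduces the estimate to a termwise comparison of exponential factors. First I would compute the squared norm of the projection directly from its spectral expansion \eqref{sol3}. Writing $\mathbf{P}^{\varepsilon}U(t)=\sum_{i\in\Theta(\varepsilon)}e^{\lambda_{i}(T-t)}\langle U(T),\phi_{i}\rangle\phi_{i}$ and using $\langle\phi_{i},\phi_{j}\rangle=\delta_{ij}$, the cross terms vanish and one obtains the Parseval-type identity
\[
\left\Vert \mathbf{P}^{\varepsilon}U(t)\right\Vert^{2}=\sum_{i\in\Theta(\varepsilon)}e^{2\lambda_{i}(T-t)}\left\langle U(T),\phi_{i}\right\rangle^{2}.
\]

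Next I would bound the exponential weights. For $t\in[0,T]$ we have $T-t\ge 0$, and for every index $i\in\Theta(\varepsilon)$ the defining condition of the admissible set gives $\lambda_{i}\le M_{\varepsilon}$; hence $e^{2\lambda_{i}(T-t)}\le e^{2M_{\varepsilon}(T-t)}$ uniformly in $i$. Pulling this common factor out of the sum yields
\[
\left\Vert \mathbf{P}^{\varepsilon}U(t)\right\Vert^{2}\le e^{2M_{\varepsilon}(T-t)}\sum_{i\in\Theta(\varepsilon)}\left\langle U(T),\phi_{i}\right\rangle^{2}.
\]

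Finally, since $\Theta(\varepsilon)\subseteq\{1,\dots,n\}$ and each summand $\langle U(T),\phi_{i}\rangle^{2}$ is nonnegative, I would enlarge the index set to all of $\{1,\dots,n\}$ and invoke the Parseval identity for the full orthonormal basis, $\sum_{i=1}^{n}\langle U(T),\phi_{i}\rangle^{2}=\left\Vert U(T)\right\Vert^{2}$. Taking square roots then gives the claimed bound. I do not anticipate a genuine obstacle here; the only point requiring care is the monotonicity step, which relies on $T-t\ge 0$, so I would note explicitly that the estimate is intended for $t\in[0,T]$ (the backward reconstruction regime). The whole argument is essentially the observation that the cut-off projection replaces the unbounded spectral multipliers $e^{\lambda_{i}(T-t)}$ of \eqref{sol1} by multipliers capped at $e^{M_{\varepsilon}(T-t)}$, which is precisely the mechanism that restores stability.
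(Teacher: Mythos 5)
Your proof is correct and matches the paper's argument essentially step for step: both expand $\left\Vert \mathbf{P}^{\varepsilon}U(t)\right\Vert^{2}$ via orthonormality to get the Parseval-type identity, bound each factor $e^{2\lambda_{i}(T-t)}$ by $e^{2M_{\varepsilon}(T-t)}$ using the definition of $\Theta(\varepsilon)$, and control the remaining sum by $\left\Vert U(T)\right\Vert^{2}$. Your explicit remarks about enlarging the index set (Bessel/Parseval) and the need for $T-t\ge 0$ simply make precise two steps the paper leaves implicit.
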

\begin{proof}
	The proof can be done directly by taking into account the formulation of the cut-off projection in (\ref{sol3}). Indeed, by the  orthogonality of the eigenvectors, we have
	\begin{align*}
		\left\Vert \mathbf{P}^{\varepsilon}U\left(t\right)\right\Vert ^{2} & =\left\langle \mathbf{P}^{\varepsilon}U\left(t\right),\mathbf{P}^{\varepsilon}U\left(t\right)\right\rangle \\
		& =\sum_{i\in\Theta(\varepsilon)}\sum_{j\in\Theta(\varepsilon)}e^{\lambda_{i}\left(T-t\right)}e^{\lambda_{j}\left(T-t\right)}\left\langle U(T),\phi_{i}\right\rangle \left\langle U(T),\phi_{j}\right\rangle \left\langle \phi_{i},\phi_{j}\right\rangle \\
		& =\sum_{i\in\Theta(\varepsilon)}e^{2\lambda_{i}\left(T-t\right)}\left|\left\langle U(T),\phi_{i}\right\rangle \right|^{2}.
	\end{align*}
Thus, it yields that
\[
\left\Vert \mathbf{P}^{\varepsilon}U\left(t\right)\right\Vert ^{2}\le e^{2M_{\varepsilon}\left(T-t\right)}\left\Vert U(T)\right\Vert ^{2},
\]
which completes the proof of the theorem.
\end{proof}

Theorem \ref{thm:2} shows that the cut-off projection solution is bounded at each noise
level. This result also proves the stability of the approximate solution when the input, $U(T)$, is perturbed by noise. Let $U_0\in \mathbb{R}^n$ be the pixel values of the ideal image $g_0$, and let $U_0^{\varepsilon}\in \mathbb{R}^n$ be the pixel values of the noisy image $g_0^{\varepsilon}$. We assume that
\[
\left\Vert U_{0}-U_{0}^{\varepsilon}\right\Vert \le\varepsilon.
\]
Henceforth, we obtain
\begin{align}\label{noise}
\left\Vert U_{T}-U_{T}^{\varepsilon}\right\Vert \le\left\Vert U_{0}-U_{0}^{\varepsilon}\right\Vert \le\varepsilon,
\end{align}
where $U_{T}$ and $U_{T}^{\varepsilon}$ are, respectively, the terminal pixel values of the ideal and noisy images after the forward solver (\ref{pdeimage}).

\begin{thm}\label{thm:3}
	Let $U_T,U_T^{\varepsilon}\in\mathbb{R}^{n}$ be  the terminal data of the ideal solution $U(t)$ and the noisy solution $U^{\varepsilon}(t)$, respectively. These data satisfy (\ref{noise}). Then, the following stability estimate holds true:
	\begin{align}\label{ddd}
		\left\Vert \mathbf{P}^{\varepsilon}U\left(t\right)-\mathbf{P}^{\varepsilon}U^{\varepsilon}\left(t\right)\right\Vert \le e^{M_{\varepsilon}\left(T-t\right)}\varepsilon.
	\end{align}
\end{thm}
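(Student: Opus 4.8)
The plan is to reduce the statement directly to Theorem \ref{thm:2} by exploiting the linearity of the cut-off projection in its terminal data. Observe from the definition (\ref{sol3}) that $\mathbf{P}^{\varepsilon}$ depends on the signal only through its terminal value, and that this dependence is linear through the inner products $\left\langle U(T),\phi_{i}\right\rangle$. Consequently, the whole argument amounts to rewriting the difference of the two projections as a single projection applied to the difference of the terminal data, and then quoting the bound already proved in Theorem \ref{thm:2}.

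First I would set $W(t):=U(t)-U^{\varepsilon}(t)$, so that its terminal value is $W(T)=U_{T}-U_{T}^{\varepsilon}$. Using that the inner product is linear in its first argument, the two sums over $\Theta(\varepsilon)$ collapse into one:
\begin{align*}
	\mathbf{P}^{\varepsilon}U(t)-\mathbf{P}^{\varepsilon}U^{\varepsilon}(t)=\sum_{i\in\Theta(\varepsilon)}e^{\lambda_{i}(T-t)}\left\langle U_{T}-U_{T}^{\varepsilon},\phi_{i}\right\rangle \phi_{i}=\mathbf{P}^{\varepsilon}W(t).
\end{align*}
Next I would apply Theorem \ref{thm:2} to $W$, treating $W(T)=U_{T}-U_{T}^{\varepsilon}$ as the terminal data to which the theorem applies. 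This immediately yields
\begin{align*}
	\left\Vert \mathbf{P}^{\varepsilon}U(t)-\mathbf{P}^{\varepsilon}U^{\varepsilon}(t)\right\Vert =\left\Vert \mathbf{P}^{\varepsilon}W(t)\right\Vert \le e^{M_{\varepsilon}(T-t)}\left\Vert U_{T}-U_{T}^{\varepsilon}\right\Vert.
\end{align*}
Finally, I would invoke the noise assumption (\ref{noise}), namely $\left\Vert U_{T}-U_{T}^{\varepsilon}\right\Vert \le\varepsilon$, to replace the terminal-norm factor by $\varepsilon$ and thereby arrive at the claimed estimate (\ref{ddd}).

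Since every step is a direct application of an already-established identity or inequality, I do not anticipate a genuine obstacle here. The only point requiring care is the very first reduction: one must record that the expansion coefficients enter $\mathbf{P}^{\varepsilon}$ linearly, so that the two separate projections of $U_{T}$ and $U_{T}^{\varepsilon}$ legitimately combine into a single projection of $U_{T}-U_{T}^{\varepsilon}$. Once this linearity is noted, the stability estimate follows as a routine corollary of Theorem \ref{thm:2} together with (\ref{noise}), and no additional spectral input is needed.
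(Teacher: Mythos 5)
Your proposal is correct and matches the paper's argument in essence: the paper's proof likewise uses linearity to write the difference as a single cut-off expansion in $\left\langle U_{T}-U_{T}^{\varepsilon},\phi_{i}\right\rangle$, bounds it by $e^{2M_{\varepsilon}(T-t)}\varepsilon^{2}$ via orthogonality, and invokes (\ref{noise}). The only cosmetic difference is that you quote Theorem \ref{thm:2} as a black box applied to $W(t)=U(t)-U^{\varepsilon}(t)$, whereas the paper inlines that same one-line computation.
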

\begin{proof}
	The proof is straightforward because
	\begin{align*}
		\left\Vert \mathbf{P}^{\varepsilon}U\left(t\right)-\mathbf{P}^{\varepsilon}U^{\varepsilon}\left(t\right)\right\Vert ^{2} & =\sum_{i\in\Theta(\varepsilon)}e^{2\lambda_{i}\left(T-t\right)}\left|\left\langle U_{T}-U_{T}^{\varepsilon},\phi_{i}\right\rangle \right|^{2}\\
		& \le e^{2M_{\varepsilon}\left(T-t\right)}\varepsilon^{2}.
	\end{align*}
\end{proof}

Now, we are in a position to state the following theorem for the convergence rate of the cut-off projection.

\begin{thm}\label{thm:4}
	Let $\mathbf{P}^{\varepsilon}U^{\varepsilon}(t)\in \mathbb{R}^{n}$ be the stable approximate solution of $U(t)$ associated with the terminal data $U_{T}^{\varepsilon}$. Then, the following uniform-in-time estimate holds:
	\begin{align}\label{eee}
		\left\Vert U\left(t\right)-\mathbf{P}^{\varepsilon}U^{\varepsilon}\left(t\right)\right\Vert \le M_{\varepsilon}^{-1}\left\Vert \frac{d}{dt}U\left(t\right)\right\Vert +e^{M_{\varepsilon}\left(T-t\right)}\varepsilon.
	\end{align}
	Furthermore, by choosing
	\begin{align*}
		M_{\varepsilon} = \frac{1}{T}\ln\left(\varepsilon^{-\gamma}\right),\quad \gamma \in (0,1),
	\end{align*}
	we obtain
	\begin{align}\label{fff}
		\left\Vert U\left(t\right)-\mathbf{P}^{\varepsilon}U^{\varepsilon}\left(t\right)\right\Vert \le \frac{T}{\ln\left(\varepsilon^{-\gamma}\right)}\left\Vert \frac{d}{dt}U\left(t\right)\right\Vert +\varepsilon^{1-\gamma\left(1-\frac{t}{T}\right)}.
	\end{align}
\end{thm}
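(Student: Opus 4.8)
The plan is to insert the intermediate term $\mathbf{P}^{\varepsilon}U(t)$, i.e. the cut-off projection applied to the \emph{exact} terminal data $U_{T}$, and split by the triangle inequality,
\[
\left\Vert U(t)-\mathbf{P}^{\varepsilon}U^{\varepsilon}(t)\right\Vert \le \left\Vert U(t)-\mathbf{P}^{\varepsilon}U(t)\right\Vert + \left\Vert \mathbf{P}^{\varepsilon}U(t)-\mathbf{P}^{\varepsilon}U^{\varepsilon}(t)\right\Vert.
\]
The second summand is precisely the quantity estimated in Theorem \ref{thm:3}, hence is bounded by $e^{M_{\varepsilon}(T-t)}\varepsilon$. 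So the whole task reduces to controlling the \emph{regularization error} $\left\Vert U(t)-\mathbf{P}^{\varepsilon}U(t)\right\Vert$, which involves only the noise-free data.

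First I would represent both $U(t)$, via the spectral formula (\ref{sol1}) with $U_{T}$ in place of $U_{T}^{\varepsilon}$, and $\mathbf{P}^{\varepsilon}U(t)$, via (\ref{sol3}), in the orthonormal eigenbasis. Their difference is then supported exactly on the discarded modes $i\notin\Theta(\varepsilon)$, and orthonormality gives
\[
\left\Vert U(t)-\mathbf{P}^{\varepsilon}U(t)\right\Vert^{2}=\sum_{i\notin\Theta(\varepsilon)}e^{2\lambda_{i}(T-t)}\left|\left\langle U_{T},\phi_{i}\right\rangle\right|^{2}.
\]
The key step is that every discarded index satisfies $\lambda_{i}>M_{\varepsilon}$, so that $1<\lambda_{i}^{2}/M_{\varepsilon}^{2}$; inserting this factor and then enlarging the sum back to all $i$ yields the upper bound $M_{\varepsilon}^{-2}\sum_{i=1}^{n}\lambda_{i}^{2}e^{2\lambda_{i}(T-t)}\left|\left\langle U_{T},\phi_{i}\right\rangle\right|^{2}$.

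I would then recognize the last sum as $\left\Vert \frac{d}{dt}U(t)\right\Vert^{2}$: differentiating (\ref{sol1}) term by term gives $\frac{d}{dt}U(t)=-\sum_{i}\lambda_{i}e^{\lambda_{i}(T-t)}\left\langle U_{T},\phi_{i}\right\rangle\phi_{i}$, and orthonormality produces exactly that expression. Taking square roots gives $\left\Vert U(t)-\mathbf{P}^{\varepsilon}U(t)\right\Vert\le M_{\varepsilon}^{-1}\left\Vert \frac{d}{dt}U(t)\right\Vert$, and combining this with the stability term proves (\ref{eee}). Estimate (\ref{fff}) then follows by the elementary substitution $M_{\varepsilon}=\frac{1}{T}\ln(\varepsilon^{-\gamma})$, which gives $M_{\varepsilon}^{-1}=T/\ln(\varepsilon^{-\gamma})$ and $e^{M_{\varepsilon}(T-t)}\varepsilon=\varepsilon^{1-\gamma(1-t/T)}$.

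The argument has no serious obstacle, thanks to the finiteness of the graph, which guarantees that all spectral sums are finite and that $\frac{d}{dt}U(t)$ exists classically, so no regularity hypothesis needs to be verified. The only genuinely load-bearing idea is the trade $1<\lambda_{i}^{2}/M_{\varepsilon}^{2}$ on the truncated modes, which converts the otherwise uncontrolled factor $e^{2\lambda_{i}(T-t)}$ into the derivative norm. The one place I would double-check is the passage from the tail sum over $i\notin\Theta(\varepsilon)$ to the full sum over all $i$, since that is the only step in the regularization term where an inequality, rather than an identity, is used.
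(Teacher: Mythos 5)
Your proposal is correct and follows essentially the same route as the paper's proof: the same triangle-inequality split through $\mathbf{P}^{\varepsilon}U(t)$, the same invocation of Theorem \ref{thm:3} for the noise term, and the same trade $1\le\lambda_{i}^{2}/M_{\varepsilon}^{2}$ on the discarded modes (the paper writes it as inserting $\lambda_{i}^{-2}\lambda_{i}^{2}$ and bounding $\lambda_{i}^{-2}\le M_{\varepsilon}^{-2}$) followed by enlarging the tail sum to the full derivative norm. The step you flagged for double-checking is indeed sound, since all summands are non-negative.
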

\begin{proof} In view of the facts that $\left|\Theta(\varepsilon)\right|\le \left|V\right|$ and
	\begin{align}\label{aaa}
	U\left(t\right)=\sum_{i=1}^{n}e^{\lambda_{i}(T-t)}\left\langle U_{T},\phi_{i}\right\rangle \phi_{i},
	\end{align}
	we have
	\begin{align}\label{bbb}
	U\left(t\right)-\mathbf{P}^{\varepsilon}U\left(t\right)=\sum_{i\notin\Theta(\varepsilon)}e^{\lambda_{i}\left(T-t\right)}\left\langle U_{T},\phi_{i}\right\rangle \phi_{i}
	\end{align}
	
	It follows from (\ref{aaa}) that
	\[
	\frac{d}{dt}U\left(t\right)=\sum_{i=1}^{n}-\lambda_{i}e^{\lambda_{i}(T-t)}\left\langle U_{T},\phi_{i}\right\rangle \phi_{i}.
	\]
	Combining this with (\ref{bbb}), we estimate that
	\begin{align*}
		\left\Vert U\left(t\right)-\mathbf{P}^{\varepsilon}U\left(t\right)\right\Vert ^{2} & =\sum_{i\notin\Theta\left(\varepsilon\right)}\lambda_{i}^{-2}\lambda_{i}^{2}e^{2\lambda_{i}\left(T-t\right)}\left|\left\langle U_{T},\phi_{i}\right\rangle \right|^{2}.\\
		& \le M_{\varepsilon}^{-2}\left\Vert \frac{d}{dt}U\left(t\right)\right\Vert ^{2}.
	\end{align*}
	It is equivalent to
	\begin{align}\label{ccc}
	\left\Vert U\left(t\right)-\mathbf{P}^{\varepsilon}U\left(t\right)\right\Vert \le M_{\varepsilon}^{-1}\left\Vert \frac{d}{dt}U\left(t\right)\right\Vert .
	\end{align}
	Henceforth, by combining (\ref{ccc}), (\ref{ddd}), and using the triangle inequality, we obtain
	\begin{align*}
		\left\Vert U\left(t\right)-\mathbf{P}^{\varepsilon}U^{\varepsilon}\left(t\right)\right\Vert  & \le\left\Vert U\left(t\right)-\mathbf{P}^{\varepsilon}U\left(t\right)\right\Vert +\left\Vert \mathbf{P}^{\varepsilon}U\left(t\right)-\mathbf{P}^{\varepsilon}U^{\varepsilon}\left(t\right)\right\Vert \\
		& \le M_{\varepsilon}^{-1}\left\Vert \frac{d}{dt}U\left(t\right)\right\Vert +e^{M_{\varepsilon}\left(T-t\right)}\varepsilon.
	\end{align*}
	This completes the proof of (\ref{eee}). Proof of (\ref{fff}) also follows.
\end{proof}

\begin{remark}
	In the proof of Theorem \ref{thm:4}, we obtain a logarithmic rate of convergence under the assumption that $\left|\Theta(\varepsilon)\right|\le \left|V\right|$. When $\varepsilon$ is so small such that $\left|\Theta(\varepsilon)\right|> \left|V\right|$, as mentioned above, we set $M_{\varepsilon} = \lambda_{n}$. This means that $U(t)$ coincides with $\mathbf{P}^{\varepsilon}U(t)$, i.e. $U(t) - \mathbf{P}^{\varepsilon}U(t) = 0$. In this scenario, we obtain a Lipschitz rate of convergence. Last but not least, taking $t=0$ in (\ref{fff}) gives us that
	\begin{align*}
		\left\Vert U_0-\mathbf{P}^{\varepsilon}U_0^{\varepsilon}\right\Vert \le \frac{T}{\ln\left(\varepsilon^{-\gamma}\right)}\left\Vert \frac{d}{dt}U_0\right\Vert +\varepsilon^{1-\gamma}
	\end{align*}
	in which we assume the existence of $\frac{d}{dt}U_0\in \mathbb{R}^{n}$.
\end{remark}

\section{Numerical examples}

In this section, we provide a series of numerical examples to demonstrate the proposed reconstruction algorithm in denoising gray images corrupted by the commonly used additive white Gaussian noise (AWGN). The algorithm is implemented using MATLAB 2016 and executed using a computer equipped with Intel\textsuperscript{\textregistered} Core{\texttrademark} i5-10210U @ 1.60GHz and 8 GB of RAM. In our numerical experiments, we use \emph{Set12} -- a widely-used test dataset for benchmarking denoising algorithms; cf. \cite{Roth2005}. The  dataset consists of 12 gray images of sizes $256 \times 256$ or $512 \times 512$; however, we resize the images to the size of $128 \times 128$ due to the limitation of computer hardware in creating larger graphs. We apply the AWGN with variance  $\sigma_{noise}=20$ to this resized dataset and thereby, obtain noisy images.

The blurred images $U_{T}^{\varepsilon}$ are obtained via the forward process in (\ref{pdeimage}). For simplicity, we apply in this process the standard forward Euler method with the Courant number of 0.03 for the conditional stability and with $T = 0.5$ that corresponds to the deviation $\sigma = 1$ in the Gaussian blurring process.  Since we work with $128 \times 128$ images, the number of vertices is $n=16384$ in our numerical results. Moreover, we fix $\varepsilon = 0.1$ and $\gamma = 0.5$ that gives $M_{\varepsilon} \approx 2.30$. For objective assessment, we use peak signal-to-noise ratio (PSNR) as an image quality metric in evaluating denoising performance, which is represented in the logarithmic decibel (dB) scale. In principle, it is computed as
\begin{equation}
	\mathrm{PSNR}(U_0, \mathbf{P}U^{\varepsilon}_{0}) = 10\log_{10}\left(\frac{255^2}{\frac{1}{n}\sum_{i=1}^n (u_{0i}- \mathbf{P}u^{\varepsilon}_{0i})^2}\right).
\end{equation}
To estimate the PSNR of our images, we compare those to the original ones that are resized from the \textit{Set12} dataset. This comparison is operated by the built-in MATLAB function \texttt{psnr}.

Figures ~\ref{fig:results-1} and~\ref{fig:results-2} show the denoising results of the cut-off solution $\mathbf{P}U_0^{\varepsilon}$ in (\ref{sol3}) for all images in \emph{Set12}. The denoised results exhibit better visual quality than that of noisy images. Especially in regions with rich textures, the noise is reduced while edge features are preserved without significant degradation. Table~\ref{tab:psnr} shows the subjective evaluations for denoising results of all images in \emph{Set12} from approximation solution (\ref{sol2}) and cut-off solution $\mathbf{P}U_0^{\varepsilon}$ in (\ref{sol3}) in terms of PSNR. As can be seen from the table, the cut-off solution $\mathbf{P}U_0^{\varepsilon}$ outperforms the approximation solution (\ref{sol2}), as the PSNR scores of cut-off solutions $\mathbf{P}U_0^{\varepsilon}$ are higher than those of approximation solutions from 1.66dB (for \textit{Monarch} image) to 2.64dB (for \textit{Couple} image). We detail that 1.66dB for the \textit{Monarch} image is calculated directly from the PSNR difference between the cut-off (18.86) and approximation (17.20) solutions; see Table \ref{tab:psnr}. The average of PSNR increase is 2.17dB, i.e. 20.31dB of cut-off solutions $\mathbf{P}U_0^{\varepsilon}$ compared with 18.14dB of approximation solutions (\ref{sol2}), which is a significant improvement.

Last but not least, we provide Table \ref{tab:time} to show the time efficiency of the cut-off projection under study. As can readily be expected from the fact that $\left|\Theta(\varepsilon)\right|<\left|V\right|$, in average, the running time for the cut-off solution $\mathbf{P}U_0^{\varepsilon}$ is 4.5 times less than that of the approximation (\ref{sol2}). Therefore, it is potential to adapt the cut-off projection to more complex scenarios (e.g. color images with three-dimensional matrices) in upcoming research projects.

\begin{table}[!t]
	\begin{center}
		\small
		\setlength{\tabcolsep}{4.75pt}
		\caption{Quantitative comparison in terms of PSNR for denoising performance between approximate solutions (\ref{sol2}) and cut-off solutions (\ref{sol3}).}
		\label{tab:psnr}
		\begin{tabular}{l||cc}
			\hline
			Image name & Approximation (\ref{sol2}) & Cut-off (\ref{sol3})\\
			\hline
			\hline
			Camera man & 17.89 & 20.15 \\
			\hline
			House & 19.40 & 22.00 \\
			\hline
			Peppers & 18.10 & 20.06 \\
			\hline
			Starfish & 18.05 & 20.08 \\
			\hline
			Monarch & 17.20 & 18.86 \\
			\hline
			Airplane & 17.76 & 19.83 \\
			\hline
			Parrot & 16.66 & 18.39 \\
			\hline
			Lena & 18.42 & 20.66 \\
			\hline
			Barbara & 18.34 & 20.37 \\
			\hline
			Boat & 18.50 & 20.94 \\
			\hline
			Man & 18.75 & 21.09 \\
			\hline
			Couple & 18.64 & 21.28 \\
			\hline
			\hline
			Avg. & 18.14 & 20.31
		\end{tabular}
	\end{center}
\end{table}

\begin{table}[!t]
	\begin{center}
		\small
		\setlength{\tabcolsep}{4.75pt}
		\caption{Comparison of running time in seconds between approximate solutions (\ref{sol2}) and cut-off solutions (\ref{sol3}).}
		\label{tab:time}
		\begin{tabular}{l||cc}
			\hline
			Image name & Approximation (\ref{sol2}) & Cut-off (\ref{sol3})\\
			\hline
			\hline
			Camera man & 1.23 & 0.28 \\
			\hline
			House & 1.15 & 0.27 \\
			\hline
			Peppers & 1.15 & 0.24 \\
			\hline
			Starfish & 1.13 & 0.26 \\
			\hline
			Monarch & 1.14 & 0.25 \\
			\hline
			Airplane & 1.14 & 0.25 \\
			\hline
			Parrot & 1.13 & 0.26 \\
			\hline
			Lena & 1.13 & 0.26 \\
			\hline
			Barbara & 1.16 & 0.27 \\
			\hline
			Boat & 1.16 & 0.26 \\
			\hline
			Man & 1.26 & 0.28 \\
			\hline
			Couple & 1.20 & 0.27 \\
			\hline
			\hline
			Avg. & 1.17 & 0.26
		\end{tabular}
	\end{center}
\end{table}

\begin{figure}
	\centering
	\subfloat[\textit{Camera man} image] {\includegraphics[width=0.23\linewidth]{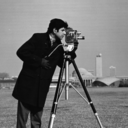}\
	\includegraphics[width=0.23\linewidth]{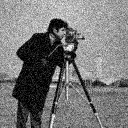}\
	\includegraphics[width=0.23\linewidth]{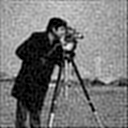}}\\
	\vspace{-3mm}
	\subfloat[\textit{House} image] {\includegraphics[width=0.23\linewidth]{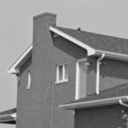}\
	\includegraphics[width=0.23\linewidth]{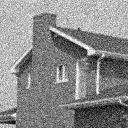}\
	\includegraphics[width=0.23\linewidth]{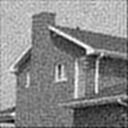}}\\
	\vspace{-3mm}
	\subfloat[\textit{Peppers} image] {\includegraphics[width=0.23\linewidth]{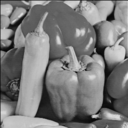}\
	\includegraphics[width=0.23\linewidth]{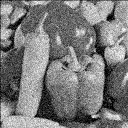}\
	\includegraphics[width=0.23\linewidth]{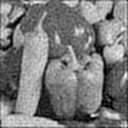}}\\
	\vspace{-3mm}
	\subfloat[\textit{Starfish} image] {\includegraphics[width=0.23\linewidth]{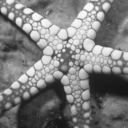}\
	\includegraphics[width=0.23\linewidth]{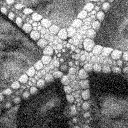}\
	\includegraphics[width=0.23\linewidth]{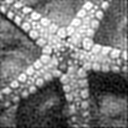}}\\
	\vspace{-3mm}
	\subfloat[\textit{Monarch} image] {\includegraphics[width=0.23\linewidth]{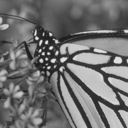}\
	\includegraphics[width=0.23\linewidth]{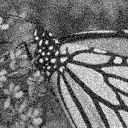}\
	\includegraphics[width=0.23\linewidth]{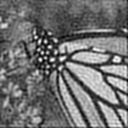}}\\
	\vspace{-3mm}
	\subfloat[\textit{Airplane} image] {\includegraphics[width=0.23\linewidth]{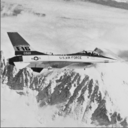}\
	\includegraphics[width=0.23\linewidth]{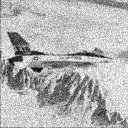}\
	\includegraphics[width=0.23\linewidth]{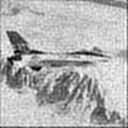}}\\
	\vspace{-3mm}
	
	\caption
	{
		Visual comparison of denoising results on standard test images in the \textit{Set12} dataset. Left column: original images; middle column: noisy images; right column: denoised images.
	}
	\label{fig:results-1}
\end{figure}

\begin{figure}
	\centering
	\subfloat[\textit{Parrot} image] {\includegraphics[width=0.23\linewidth]{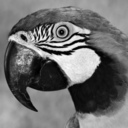}\
	\includegraphics[width=0.23\linewidth]{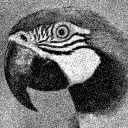}\
	\includegraphics[width=0.23\linewidth]{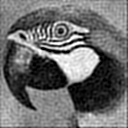}}\\
	\vspace{-3mm}
	\subfloat[\textit{Lena} image] {\includegraphics[width=0.23\linewidth]{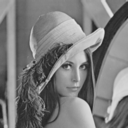}\
		\includegraphics[width=0.23\linewidth]{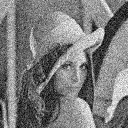}\
		\includegraphics[width=0.23\linewidth]{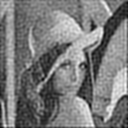}}\\
	\vspace{-3mm}
	\subfloat[\textit{Barbara} image] {\includegraphics[width=0.23\linewidth]{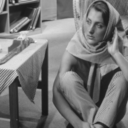}\
		\includegraphics[width=0.23\linewidth]{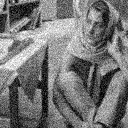}\
		\includegraphics[width=0.23\linewidth]{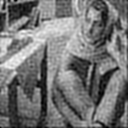}}\\
	\vspace{-3mm}
	\subfloat[\textit{Boat} image] {\includegraphics[width=0.23\linewidth]{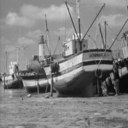}\
		\includegraphics[width=0.23\linewidth]{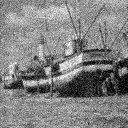}\
		\includegraphics[width=0.23\linewidth]{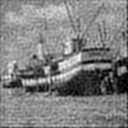}}\\
	\vspace{-3mm}
	\subfloat[\textit{Man} image] {\includegraphics[width=0.23\linewidth]{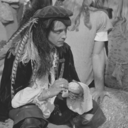}\
		\includegraphics[width=0.23\linewidth]{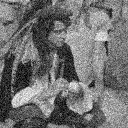}\
		\includegraphics[width=0.23\linewidth]{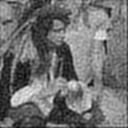}}\\
	\vspace{-3mm}
	\subfloat[\textit{Couple} image] {\includegraphics[width=0.23\linewidth]{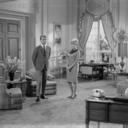}\
		\includegraphics[width=0.23\linewidth]{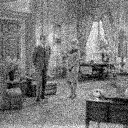}\
		\includegraphics[width=0.23\linewidth]{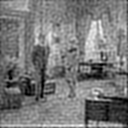}}\\
	\vspace{-3mm}
	
	\caption
	{
		Visual comparison of denoising results on standard test images in the \textit{Set12} dataset. Left column: original images; middle column: noisy images; right column: denoised images.
	}
	\label{fig:results-2}
\end{figure}

\subsection*{Acknowledgment}

Vo Anh Khoa would like to thank Professor Paul Sacks (Iowa State University, USA) for great support of his research career.

\bibliographystyle{plain}{\bibliography{bibtex}}

\begin{thebibliography}{10}

\bibitem{Brouwer2011}
A.~E. Brouwer and W.~H. Haemers.
\newblock {\em Spectra of {G}raphs}.
\newblock Springer New York, December 2011.

\bibitem{Denche2005}
M.~Denche and K.~Bessila.
\newblock A modified quasi-boundary value method for ill-posed problems.
\newblock {\em Journal of Mathematical Analysis and Applications},
  301(2):419--426, 2005.

\bibitem{Fan2019}
L.~Fan, F.~Zhang, H.~Fan, and C.~Zhang.
\newblock Brief review of image denoising techniques.
\newblock {\em Visual Computing for Industry, Biomedicine, and Art}, 2(1),
  2019.

\bibitem{Friedman2008}
A.~Friedman.
\newblock {\em Partial {D}ifferential {E}quations of {P}arabolic {T}ype}.
\newblock Dover Pubn Inc, April 2008.

\bibitem{Gilboa2002}
G.~Gilboa, N.~Sochen, and Y.~Y. Zeevi.
\newblock Forward-and-backward diffusion processes for adaptive image
  enhancement and denoising.
\newblock {\em IEEE Transactions on Image Processing}, 11(7):689--703, 2002.

\bibitem{Jung2011}
M.~Jung, X.~Bresson, T.~F. Chan, and L.~A. Vese.
\newblock Nonlocal {M}umford-{S}hah regularizers for color image restoration.
\newblock {\em IEEE Transactions on Image Processing}, 20(6):1583--1598, 2011.

\bibitem{Klibanov2019}
M.~V. Klibanov and A.~G. Yagola.
\newblock Convergent numerical methods for parabolic equations with reversed
  time via a new {C}arleman estimate.
\newblock {\em Inverse Problems}, 35(11):115012, 2019.

\bibitem{LL67}
R.~Latt\`es and J.~L. Lions.
\newblock {\em M\'ethode de {Q}uasi-r\'eversibilit\'e et {A}pplications}.
\newblock Dunod, Paris, 1967.

\bibitem{NTT10}
P.~T. Nam, D.~D. Trong, and N.~H. Tuan.
\newblock The truncation method for a two-dimensional nonhomogeneous backward
  heat problem.
\newblock {\em Applied Mathematics and Computation}, 216:3423--3432, 2010.

\bibitem{Nguyen2019}
H.~T. Nguyen, V.~A. Khoa, and V.~A. Vo.
\newblock Analysis of a quasi-reversibility method for a terminal value
  quasi-linear parabolic problem with measurements.
\newblock {\em {SIAM} Journal on Mathematical Analysis}, 51(1):60--85, 2019.

\bibitem{Roth2005}
S.~Roth and M.J. Black.
\newblock Fields of experts: a framework for learning image priors.
\newblock In {\em Proceedings of the IEEE/CVF Conference on Computer Vision and
  Pattern Recognition (CVPR)}, volume~2, pages 860--867, 2005.

\bibitem{Shi2021a}
K.~Shi.
\newblock Coupling local and nonlocal diffusion equations for image denoising.
\newblock {\em Nonlinear Analysis: Real World Applications}, 62:103362, 2021.

\bibitem{Shi2021}
K.~Shi.
\newblock Image denoising by nonlinear nonlocal diffusion equations.
\newblock {\em Journal of Computational and Applied Mathematics}, 395:113605,
  2021.

\bibitem{Tian2020}
C.~Tian, L.~Fei, W.~Zheng, Y.~Xu, W.~Zuo, and C.-W. Lin.
\newblock Deep learning on image denoising: An overview.
\newblock {\em Neural Networks}, 131:251--275, 2020.

\bibitem{Tuan2018}
N.~H. Tuan, V.~A. Khoa, M.~T.~N. Truong, T.~T. Hung, and M.~N. Minh.
\newblock Application of the cut-off projection to solve a backward heat
  conduction problem in a two-slab composite system.
\newblock {\em Inverse Problems in Science and Engineering}, 27(4):460--483,
  2018.

\bibitem{Welk2009}
M.~Welk, G.~Gilboa, and J.~Weickert.
\newblock Theoretical {F}oundations for {D}iscrete {F}orward-and-{B}ackward
  {D}iffusion {F}iltering.
\newblock pages 527--538. Springer Berlin Heidelberg, 2009.

\end{thebibliography}

\end{document}